\documentclass{scrartcl}
\usepackage[T1]{fontenc}
\usepackage[english]{babel}
\usepackage[latin1]{inputenc}
\usepackage{amsmath}
\usepackage{amsthm,amssymb}
\usepackage{dsfont}
\usepackage{csquotes}
\usepackage{graphicx}
\usepackage{color, pgf}
\usepackage{hyperref}
\usepackage{url}
\hypersetup{colorlinks=true, urlcolor=blue, citecolor=red} 
\newtheorem{thm}{Theorem}[section]
\newtheorem{defn}[thm]{Definition}
\newtheorem{lem}[thm]{Lemma}

\newtheorem{rem}[thm]{Remark}

\newcommand{\capa}{\operatorname{cap}}
\newcommand{\R}{\mathbb{R}}

\newcommand{\W}{\mathcal{W}}

\begin{document}

\title{Uniqueness results of a nonlinear stochastic diffusion-convection equation with reflection}
\author{Niklas Sapountzoglou \thanks{Institute of Mathematics, Clausthal University of Technology, 38678 Clausthal-Zellerfeld, Germany \href{mailto:niklas.sapountzoglou@tu-clausthal.de}{\texttt{niklas.sapountzoglou@tu-clausthal.de}}}}
\date{}

\maketitle

\begin{abstract}
We are interested in the uniqueness of solutions of a nonlinear, pseudomonotone, stochastic diffusion evolution problem with homogeneous Dirichlet boundary conditions with reflection, where the noise term is additive and given by a stochastic It\^{o} integral with respect to a Hilbert space valued cylindrical Wiener process. In fact, since there is no It\^{o} formula available for a solution in general, a general uniqueness result seems not to be available. Nevertheless, assuming more regularity for the solutions or the reflection, we may show some comparison principles.
\end{abstract}
\noindent
\textbf{Keywords:} Stochastic PDEs $\bullet$ Obstacle problem 
$\bullet$ Variational inequalities $\bullet$ \\
additive noise \\

\noindent
\textbf{Mathematics Subject Classification (2020):} 35R60 $\bullet$ 60H15 $\bullet$ 35K55 $\bullet$ 35R35

\section{The first section of your paper}\label{sec1}
\subsection{Introduction}\label{sec 1.1}
We want to study uniqueness or, more precisely, comparison principles of solutions of the nonlinear stochastic obstacle problem with additive colored noise
\begin{align}\label{Obstacle problem1}
		\begin{cases}
			du \ -\operatorname{div}\, a(\cdot,u,\nabla u)\,dt + f(u) \, dt= \Phi \,dW +d\eta &\text{ in } Q_T\\
			u(0,\cdot)=u_0\geq 0 &\text{ in } D\\
			u=0 &\text{ on } (0,T) \times \partial D\\
			u\geq 0, ~\eta \geq 0 \text{ and } \eta(u)=0,	 
		\end{cases} \tag{P}
	\end{align}
where the spatial domain $D$ is a bounded Lipschitz domain in $\mathbb{R}^d$, $T>0$ and $Q_T:= (0,T) \times D$. In this sequel, $\eta$ plays the role of a reflection measure, i.e., $\eta$ prevents $u$ from becoming negative. The condition $\eta(u)=0$ secures that the reflection is minimal, i.e., $\eta$ is equal to $0$ whenever $u>0$. The exact meaning of the expression $\eta(u)$ is given below. Problems of type \eqref{Obstacle problem1} have been studied in the past decades in the deterministic case (see e.g. \cite{DuvautLions72}) and for obstacles $\psi$ of Lewy-Stampacchia type in \cite{GuibeMTV2020,GUIBE2024,MTV19}. Stochastic obstacle problems with obstacles $\psi$ of Lewy-Stampacchia type have been studied in \cite{STVZ23, TahraouiVallet2022}. Therein, existence and uniqueness of a pair $(u, \eta)$ solving \eqref{Obstacle problem1} with obstacle $\psi$ instead of $0$ has been shown for different types of nonlinear operators and additive forcing $f$. The construction of the obstacle leads to the fact that the reflection measure $\eta$ is a functional in $L^{p'}(0,T; W^{-1,p'}(D))$, where $p > 1$ depends on the nonlinear operator and $p' = \frac{p}{p-1}$. In this case, an It\^{o} formula for the equation in \eqref{Obstacle problem1} is available, but this theory does not cover the case $\psi \equiv 0$. The case $\psi \equiv 0$ has been studied for $d=1$ for linear operators $- \operatorname{div} a(u, \nabla u)$ (see, e.g. \cite{Pardoux1993,Haussmann1989,Pardoux1992}). In those works, existence and uniqueness of solutions have been studied, where the uniqueness results in \cite{Pardoux1993,Pardoux1992} heavily rely on the fact that a linear problem is considered, the uniqueness result in \cite{Haussmann1989} a-priori only holds for regular solutions. The uniqueness result therein has been generalized by establishing an It\^{o} formula in \cite{FP}. We mention that due to the fact that only one spatial dimension is considered, the solution $u$ is continuous with respect to space-time. In higher spatial dimensions the solution fails to be continuous in general. As a consequence, an It\^{o} formula as stated in \cite{FP} fails to hold in general which makes it difficult to obtain a uniqueness result in the general case (see Section \ref{Difficulties}). Moreover, the arbitrary spatial dimension makes it challenging to find a proper meaning to the expression $\eta(u)$ since $\eta$ is a non-negative random Radon measure with respect to $Q_T$ and therefore $\eta$ is defined on Borel measurable functions only. Hence, a proper Borel measurable representative of a solution $u$ has to be determined which is non-negative $\eta$-a.e. in $Q_T$ and satisfies $\eta(u) := \int_{Q_T} u \, d\eta=0$. To this end, the theory of parabolic capacity (see e.g. \cite{DPP,Pet08,PPP11}) has to be used. The existence of a pair $(u, \eta)$ solving the obstacle problem \eqref{Obstacle problem1} has been established in \cite{STVZ24} under high regularity assumptions on the additive noise term.

\subsection{Notations and assumptions}
For $d\in\mathbb{N}$ with $d \geq 2$ let $D\subset \mathbb{R}^d$ be a bounded domain with Lipschitz boundary, $T>0$ and $Q_T:=(0,T)\times D$. Moreover we assume $p \geq 2$ and $p' = \frac{p}{p-1}$. The closure of $\mathcal{C}_c^{\infty}(D)$ in $W^{1,p}(D)$ will be denoted by $W_0^{1,p}(D)$, where $W^{1,p}(D)$ is the standard Sobolev space. The topological dual of $W_0^{1,p}(D)$ will be denoted by $W^{-1,p'}(D)$. Moreover, let $(\Omega,\mathcal{F},\mathds{P}, (\mathcal{F}_t)_{t\geq 0}, (W_t)_{t \geq 0})$ be a stochastic basis with a complete, right continuous filtration and a $L^2(D)$-valued cylindrical Wiener process $(W_t)_{t\geq 0}$.
The problem of our interest may formally be written as: find a pair $(u,\eta)$ which is a solution to
	\begin{align}\label{Obstacle problem}
		\begin{cases}
			du \ -\operatorname{div}\, a(\cdot,u,\nabla u)\,dt + f(u) \, dt= \Phi \,dW +d\eta &\text{ in } Q_T\\
			u(0,\cdot)=u_0\geq 0 &\text{ in } D\\
			u=0 &\text{ on } (0,T) \times \partial D\\
			u\geq 0, ~\eta \geq 0 \text{ and } \int_{Q_T} u \, d\eta=0,	 
		\end{cases}
	\end{align}
where $f: \R \to \R$ is Lipschitz continuous with Lipschitz constant $L_f\geq 0$ such that $f(0)=0$ and $a:D\times\mathbb{R}^{d+1} \to \mathbb{R}^d$ is a Carath\'eodory function satisfying the following assumptions:
	\begin{itemize}
		\item[$(A1)$] Monotonicity: \[(a(x,\lambda,\xi)-a(x,\lambda,\zeta))\cdot(\xi-\zeta)\geq 0\]
		for all $\lambda\in\mathbb{R}$, $\xi,\zeta\in\mathbb{R}^d$ and almost every $x\in D$.
		\item[$(A2)$] Coercivity and growth: There exist $\kappa\in L^1(D)$, $C_1>0$, $C_2\geq 0$, $C_3\geq 0$ and a nonnegative function $g\in L^{p'}(D)$ such that 
		\[a(x,\lambda,\xi)\cdot \xi\geq \kappa(x)+C_1|\xi|^{p},\]
		\[|a(x,\lambda,\xi)|\leq C_2|\xi|^{p-1}+C_3|\lambda|^{p-1}+g(x)\]
		for all $\lambda\in\mathbb{R}$, $\xi\in\mathbb{R}^d$ and almost every $x\in D$.
		\item[$(A3)$] Lipschitz regularity: There exists $C_4\geq 0$ and a nonnegative function $h\in L^{p'}(D)$ such that 
		\[|a(x,\lambda_1,\xi)-a(x,\lambda_2,\xi)|\leq \Big(C_4|\xi|^{p-1}+h(x)\Big)|\lambda_1-\lambda_2|\]
		for all $\lambda_1$, $\lambda_2\in \mathbb{R}$, for all $\xi\in\mathbb{R}^d$ and almost all $x\in D$.
	\end{itemize}
Then, the operator $W_0^{1,p}(D) \ni u \mapsto - \operatorname{div} a(u, \nabla u) \in W^{-1,p'}(D)$ is a pseudomonotone Leray-Lions operator. Moreover, we assume $\Phi: \Omega \times (0,T) \to HS(L^2(D))$ to be progressively measurable and $\Phi \in L^2(\Omega \times (0,T); HS(L^2(D)))$. In \eqref{Obstacle problem} the reflection $\eta$ is an unknown of the problem and a non-negative random Radon-measure in general.

\subsection{Notion of a solution}\label{sec 1.3}
\begin{defn}
A random Radon measure $\eta$ on $Q_T$ is called weak-$\ast$ adapted if and only if, for any $\psi\in \mathcal{C}_c(Q_T)$ and  any $t\in(0,T)$, the mapping 
	\begin{align*}
		\Omega \ni \omega \mapsto \int_{(0,t]\times D} \psi(s,x)  \ d\eta_{\omega}(s,x) \quad \text{is } \mathcal{F}_t \text{-measurable}.
	\end{align*}	
\end{defn}

\begin{defn}
	For $2\leq p<\infty$ we define the space
		\[\W:=\{v\in L^p(0,T;W^{1,p}_0(D)) \ | \ \partial_t v \in L^{p'}(0,T;W^{-1,p'}(D))\}.\]
	It is a separable reflexive Banach space endowed with its natural norm
		\begin{equation*}
			\Vert v \Vert_{\W}^p := \Vert v \Vert_{L^p(0,T; W_0^{1,p}(D))}^p + \Vert \partial_t v \Vert_{L^{p'}(0,T; W^{-1,p'}(D))}^p.
		\end{equation*}
	Moreover, $\W$ is continuously imbedded in $\mathcal{C}([0,T]; L^2(D))$ and any $v \in \W$ admits a $\capa_p$-quasi continuous representative (see Section \ref{Appendix}, Appendix for more details about parabolic capacity and $\capa_p$-quasi continuous functions).
	Its dual space will be denoted by $\W'$.
\end{defn}

\begin{defn}\label{Solution}
Let $u_0 \in L^2(\Omega \times D)$ be $\mathcal{F}_0$-measurable and $u_0 \geq 0$ a.e. in $\Omega \times D$. A pair $(u, \eta)$ is a solution to \eqref{Obstacle problem}, if and only if
\begin{itemize}
	\item[$i.)$] $u \geq 0$ a.e. in $\Omega \times Q_T$ and
	\begin{align*}
		u \in L_w^2(\Omega,L^\infty(0,T,L^2(D)))\cap L^p(\Omega; L^p(0,T; W_0^{1,p}(D))),
	\end{align*}
	\item[$ii.)$] there exists a right-continuous in time representative of $u$ with values in $L^{2}(D)$ satisfying $u(t=0^+)=u_0$, 
	\item[$iii.)$] there exists $\eta \in L^{p^\prime}(\Omega, \W^\prime)$ that is also  a weak-$\ast$ adapted random non-negative Radon measure on $Q_T$ with, $\mathds{P}$-a.s. 
\begin{align*}
	\partial_t\Big[u-\int_0^{\cdot} \Phi \, dW\Big] - \operatorname{div} a(\cdot, u , \nabla u) + f(u) = \eta \quad \text{in }\W^\prime,
\end{align*}
\item[$iv.)$] $\mathds{P}$-a.s. in $\Omega$ there exists a quasi everywhere defined representative of $u$ in $Q_T$, non-negative quasi everywhere in $Q_T$,  such that $\int_{Q_T} u \ d\eta =0$.
\end{itemize}
 \end{defn}	

\section{Comparison principles for regular solutions}\label{sec2}

\begin{thm}\label{Contraction L2}
  Let the function $a$ be independent of the second variable and $(u, \eta)$ and $(v, \nu)$ be two solutions of \eqref{Obstacle problem} with initial values $u_0 \in L^2(\Omega \times D)$ and $v_0 \in L^2(\Omega \times D)$ $\mathcal{F}_0$-measurable and non-negative a.e. in $\Omega \times D$ respectively. Moreover, let $\mathbb{P}$-a.s. $\eta, \nu \in L^{p'}(0,T;W^{-1,p'}(D))$. Then there exists a constant $C>0$ only depending on $T$ and $f$ such that $\mathbb{P}$-a.s.
  \begin{align*}
  	 \sup\limits_{t \in [0,T]} \Vert u(t) - v(t) \Vert_{L^2(D)} \leq C \Vert u_0 - v_0 \Vert_{L^2(D)}.
  \end{align*}
\end{thm}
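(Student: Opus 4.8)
The conceptual point is that the forcing $\Phi\,dW$ is additive and, being the \emph{same} for both solutions, cancels in the difference; what is left is a deterministic-type parabolic (in)equality to which the classical chain rule in $\W$ applies, so no It\^o formula is needed. I argue pathwise, i.e. for $\mathbb{P}$-almost every fixed $\omega$. Set $w:=u-v=\big(u-\int_0^{\cdot}\Phi\,dW\big)-\big(v-\int_0^{\cdot}\Phi\,dW\big)$, so that the stochastic integrals cancel. By condition $iii.)$,
\[
\partial_t w = \operatorname{div}\big(a(\cdot,\nabla u)-a(\cdot,\nabla v)\big)-\big(f(u)-f(v)\big)+(\eta-\nu).
\]
The growth bound $(A2)$ gives $a(\cdot,\nabla u),a(\cdot,\nabla v)\in L^{p'}(Q_T;\R^d)$ because $u,v\in L^p(0,T;W_0^{1,p}(D))$, hence $\operatorname{div}(a(\cdot,\nabla u)-a(\cdot,\nabla v))\in L^{p'}(0,T;W^{-1,p'}(D))$; the Lipschitz continuity of $f$ with $f(0)=0$ gives $f(u)-f(v)\in L^p(Q_T)\hookrightarrow L^{p'}(0,T;W^{-1,p'}(D))$ (recall $p\geq 2$ and $D$ is bounded); and $\eta-\nu\in L^{p'}(0,T;W^{-1,p'}(D))$ by hypothesis. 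Together with $w\in L^p(0,T;W_0^{1,p}(D))$ this proves $w\in\W$, so in particular $w\in\mathcal{C}([0,T];L^2(D))$ and, by $ii.)$, $w(0)=u_0-v_0$.

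Since $w\in\W$, the classical integration-by-parts formula in the Gelfand triple $W_0^{1,p}(D)\hookrightarrow L^2(D)\hookrightarrow W^{-1,p'}(D)$ gives, for every $t\in[0,T]$, $\tfrac12\|w(t)\|_{L^2(D)}^2 = \tfrac12\|u_0-v_0\|_{L^2(D)}^2 + \int_0^t\langle\partial_s w(s),w(s)\rangle\,ds$; inserting the expression for $\partial_s w$ and integrating by parts in space,
\begin{align*}
\tfrac12\|w(t)\|_{L^2(D)}^2 ={}& \tfrac12\|u_0-v_0\|_{L^2(D)}^2 - \int_0^t\int_D\big(a(x,\nabla u)-a(x,\nabla v)\big)\cdot\nabla w\,dx\,ds\\
&- \int_0^t\int_D\big(f(u)-f(v)\big)\,w\,dx\,ds + \int_0^t\langle\eta(s)-\nu(s),w(s)\rangle\,ds.
\end{align*}
The first integral is nonnegative by the monotonicity assumption $(A1)$: this is exactly where it is used that $a$ is independent of the second variable, so that $(a(x,\nabla u)-a(x,\nabla v))\cdot(\nabla u-\nabla v)\geq 0$ pointwise. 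For the second integral, the Lipschitz bound on $f$ yields $|f(u)-f(v)|\,|w|\leq L_f|w|^2$, hence $-\int_0^t\int_D(f(u)-f(v))\,w\,dx\,ds\leq L_f\int_0^t\|w(s)\|_{L^2(D)}^2\,ds$.

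It remains to handle the reflection term, which is the crux of the argument. Because $\eta$ and $\nu$ lie in $L^{p'}(0,T;W^{-1,p'}(D))$ and are at the same time nonnegative Radon measures in $\W'$, the parabolic capacity theory recalled in the Appendix identifies $\int_0^t\langle\eta(s),(u-v)(s)\rangle\,ds$ with the integral of a suitable $\capa_p$-quasi continuous representative of $u-v$ against $\eta$, and analogously for $\nu$. Combining this with the complementarity relations $\int_{Q_T}u\,d\eta=\int_{Q_T}v\,d\nu=0$ from $iv.)$ and with $u\geq 0$, $v\geq 0$ quasi everywhere (hence $\eta$- and $\nu$-a.e., since these measures do not charge $\capa_p$-polar sets) gives $\int_0^t\langle\eta(s),(u-v)(s)\rangle\,ds\leq 0$ and $\int_0^t\langle\nu(s),(u-v)(s)\rangle\,ds\geq 0$, so the reflection term is $\leq 0$. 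Inserting all three estimates into the energy identity yields
\[
\|w(t)\|_{L^2(D)}^2 \leq \|u_0-v_0\|_{L^2(D)}^2 + 2L_f\int_0^t\|w(s)\|_{L^2(D)}^2\,ds,\qquad t\in[0,T],
\]
and Gronwall's lemma gives $\|w(t)\|_{L^2(D)}^2\leq e^{2L_fT}\|u_0-v_0\|_{L^2(D)}^2$ for all $t\in[0,T]$. Taking the supremum in $t$ and the square root proves the claim with $C=e^{L_fT}$, which depends only on $T$ and $f$ (through $L_f$); all properties used hold $\mathbb{P}$-a.s., so the estimate holds $\mathbb{P}$-a.s. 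The main obstacle is precisely the treatment of the reflection term just described: one has to pass rigorously between the time-integrated $W^{-1,p'}$--$W_0^{1,p}$ duality pairing, integration against the Radon measures $\eta,\nu$, and the quasi everywhere defined representatives of $iv.)$ — and it is here, together with the verification that $w\in\W$ in the first step, that the extra regularity hypothesis $\eta,\nu\in L^{p'}(0,T;W^{-1,p'}(D))$ is indispensable.
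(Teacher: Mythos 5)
Your proposal is correct and follows essentially the same route as the paper: the additive noise cancels in the difference, one checks $u-v\in\W$, tests the resulting deterministic-type equation with $u-v$, uses $(A1)$ for the elliptic term, the Lipschitz bound for $f$, the complementarity conditions together with the nonnegativity of $u,v,\eta,\nu$ to see that the reflection term is nonpositive, and concludes with Gronwall. The only cosmetic difference is that the paper disposes of the reflection term via the duality identification stated in its preceding Remark, whereas you justify the same sign condition through the quasi-continuous representatives and the fact that $\eta,\nu$ do not charge sets of zero capacity.
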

\begin{rem}
	We remark that in the case in which $(u, \eta)$ is a solution to \eqref{Obstacle problem} and the reflection $\eta$ is an element of $L^{p'}(0,T;W^{-1,p'}(D))$, condition $iv.)$ in Definition \ref{Solution} reads as
	\begin{align*}
		\langle \eta , u \rangle_{L^{p'}(0,T;W^{-1,p'}(D)), L^p(0,T;W_0^{1,p}(D))} = 0.
	\end{align*}
	Moreover, for any $v \in \W$ we have
	\begin{align*}
		\langle \eta , v \rangle_{\W', \W} = \langle \eta , v \rangle_{L^{p'}(0,T;W^{-1,p'}(D)), L^p(0,T;W_0^{1,p}(D))}.
	\end{align*}
\end{rem}
\begin{proof}
  For $t \in (0,T]$ we set $X_t:= L^p(0,t;W_0^{1,p}(D))$ and denote by $X_t'$ its topological dual. The expression $u-v$ satisfies $\mathbb{P}$-a.s. the following equality:
  	\begin{align}\label{eq 051224_01}
  		\partial_t(u-v) - \operatorname{div} (a(\nabla u) - a(\nabla v)) + f(u) - f(v) = \eta - \nu
  	\end{align}
	in $X_T$. Hence, $\partial_t (u-v) \in L^{p'}(0,T; W^{-1,p'}(D))$ and therefore $u-v \in \W$. We set $Q_t := (0,t) \times D$ and
	\begin{equation*}
		\W(0,t):= \{v\in L^p(0,t;W^{1,p}_0(D)) \ | \ \partial_t v \in L^{p'}(0,t;W^{-1,p'}(D))\}.
	\end{equation*}
	Testing \eqref{eq 051224_01} with $u-v$ yields $\mathbb{P}$-a.s. for all $t \in [0,T]$:
	\begin{align*}
		&\frac{1}{2} \int_D |u(t) - v(t)|^2 - |u_0- v_0|^2 \ dx + \int_0^t \int_D (a(\nabla u) - a(\nabla v)) \nabla(u-v) \ dx \ ds\\
		&+ \int_0^t \int_D (f(u) - f(v)) (u-v) \ dx \ ds = \langle \eta - \nu, u - v\rangle_{\W(0,t)', \W(0,t)} = \langle \eta - \nu, u - v\rangle_{X_t', X_t}\\
		&\Leftrightarrow I + II + III = IV.
	\end{align*}
	By Assumption (A1) we have $II \geq 0$. Moreover, we have
	\begin{align*}
		 III  \geq -L_f \int_0^t \int_D | u-v|^2 \ dx \ ds 
	\end{align*}
	and
	\begin{align*}
		 IV =  \langle \eta , u \rangle_{X_t', X_t} - \langle \eta , v\rangle_{X_t', X_t} - \langle \nu, u \rangle_{X_t', X_t} + \langle \nu, v\rangle_{X_t', X_t} = - \langle \eta , v\rangle_{X_t', X_t} - \langle \nu, u \rangle_{X_t)', X_t} \leq 0
	\end{align*}
	since $\eta, \nu, u,$ and $v$ are non-negative. Hence, $\mathds{P}$-a.s. for all $t \in [0,T]$ we get
	\begin{align*}
		\int_D |u(t) - v(t)|^2 \ dx \leq \int_D |u_0 - v_0|^2 \ dx + 2L_f \int_0^t \int_D | u-v|^2 \ dx \ ds.
	\end{align*}
	Now, Gronwall's lemma yields $\mathbb{P}$-a.s. for all $t \in [0,T]$:
	\begin{align*}
		\int_D |u(t) - v(t)|^2 \ dx \leq \exp(2TL_f) \int_D |u_0 - v_0|^2 \ dx.
	\end{align*}
\end{proof}
Now we establish a $L^1$-contraction principle in the case where the function $a$ is allowed to depend on the second variable.
\begin{thm}\label{L1 contraction}
  Let (A1) - (A3) be satisfied and $(u, \eta)$ and $(v, \nu)$ be two solutions of \eqref{Obstacle problem} with initial values $u_0 \in L^2(\Omega \times D)$ and $v_0 \in L^2(\Omega \times D)$ $\mathcal{F}_0$-measurable and non-negative a.e. in $\Omega \times D$ respectively. Moreover, let $\mathbb{P}$-a.s. $\eta, \nu \in L^{p'}(0,T;W^{-1,p'}(D))$. Then there exists a constant $C>0$ only depending on $T$ and $f$ such that $\mathbb{P}$-a.s.
  \begin{align*}
  	 \sup\limits_{t \in [0,T]} \Vert u(t) - v(t) \Vert_{L^1(D)} \leq C \Vert u_0 - v_0 \Vert_{L^1(D)}.
  \end{align*}
\end{thm}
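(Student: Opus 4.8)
The plan is to repeat the pathwise energy argument from the proof of Theorem~\ref{Contraction L2}, but to test the equation satisfied by $u-v$ with a smooth approximation of the sign function rather than with $u-v$ itself. As before, subtracting the two equations in Definition~\ref{Solution}~$iii.)$ (the It\^o integrals cancel since both solutions are driven by the same $\Phi$) yields, $\mathbb{P}$-a.s.,
\begin{align*}
	\partial_t(u-v) - \operatorname{div}\big(a(\cdot,u,\nabla u)-a(\cdot,v,\nabla v)\big) + f(u)-f(v) = \eta-\nu
\end{align*}
in $L^{p'}(0,T;W^{-1,p'}(D))$; by (A2) the divergence term lies in $L^{p'}(0,T;W^{-1,p'}(D))$, so $\partial_t(u-v)\in L^{p'}(0,T;W^{-1,p'}(D))$ and $u-v\in\W$. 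Fix $\delta>0$ and choose $S_\delta\colon\R\to\R$ odd, non-decreasing and Lipschitz with $S_\delta(0)=0$, $|S_\delta|\le 1$, $S_\delta\to\operatorname{sign}$ pointwise, $S_\delta'$ supported in $\{|r|\le\delta\}$ and $|r|\,S_\delta'(r)\le C_S$ uniformly in $\delta$ (for instance a smoothing of $r\mapsto\operatorname{sign}(r)\min(|r|/\delta,1)$), and set $\Theta_\delta(r):=\int_0^r S_\delta(\sigma)\,d\sigma$, so that $0\le\Theta_\delta(r)\le|r|$ and $\Theta_\delta(r)\to|r|$ as $\delta\to0$. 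Since $u-v\in\W$ and $S_\delta(0)=0$, also $S_\delta(u-v)\in\W$, so it is an admissible test function on $(0,t)$ for every $t\in(0,T]$.

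Testing with $S_\delta(u-v)$ and using the chain rule in $\W$ for the time derivative, $\int_0^t\langle\partial_t(u-v),S_\delta(u-v)\rangle\,ds=\int_D\Theta_\delta\big((u-v)(t)\big)\,dx-\int_D\Theta_\delta(u_0-v_0)\,dx$, I would treat the remaining terms as follows. Splitting $a(\cdot,u,\nabla u)-a(\cdot,v,\nabla v)=\big(a(\cdot,v,\nabla u)-a(\cdot,v,\nabla v)\big)+\big(a(\cdot,u,\nabla u)-a(\cdot,v,\nabla u)\big)$, the first part tested against $S_\delta'(u-v)\nabla(u-v)$ is non-negative by (A1) and $S_\delta'\ge 0$, while the second part is bounded, by (A3), by
\begin{align*}
	\int_{Q_t}\big(C_4|\nabla u|^{p-1}+h\big)\,|u-v|\,S_\delta'(u-v)\,|\nabla(u-v)|\,dx\,ds \le C_S\int_{Q_t}\big(C_4|\nabla u|^{p-1}+h\big)\,|\nabla(u-v)|\,\mathbf{1}_{\{|u-v|\le\delta\}}\,dx\,ds,
\end{align*}
whose integrand is dominated by $C_S(C_4|\nabla u|^{p-1}+h)\,|\nabla(u-v)|\in L^1(Q_T)$ (H\"older, since $|\nabla u|^{p-1},h\in L^{p'}(Q_T)$ and $\nabla(u-v)\in L^p(Q_T)$) and tends to $0$ a.e.\ as $\delta\to0$ because $\nabla(u-v)=0$ a.e.\ on $\{u=v\}$; hence this error vanishes in the limit. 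The $f$-term is bounded below by $-L_f\int_{Q_t}|u-v|\,dx\,ds$ since $|S_\delta|\le1$ and $f$ is $L_f$-Lipschitz.

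The crucial term is the reflection contribution $\langle\eta-\nu,S_\delta(u-v)\rangle$. Since $\eta$ and $\nu$ are non-negative Radon measures on $Q_T$ that do not charge sets of zero parabolic capacity and $S_\delta(u-v)\in\W$ admits a $\capa_p$-quasi continuous representative, one has $\langle\eta-\nu,S_\delta(u-v)\rangle=\int_{Q_T}S_\delta(u-v)\,d\eta-\int_{Q_T}S_\delta(u-v)\,d\nu$. By condition~$iv.)$ of Definition~\ref{Solution}, $u=0$ $\eta$-a.e.\ and $v=0$ $\nu$-a.e.\ (quasi everywhere on the respective supports), while $u,v\ge0$ quasi everywhere; therefore $S_\delta(u-v)=S_\delta(-v)\le0$ $\eta$-a.e.\ and $S_\delta(u-v)=S_\delta(u)\ge0$ $\nu$-a.e., so that $\langle\eta-\nu,S_\delta(u-v)\rangle\le0$ — the analogue of $IV\le0$ in the proof of Theorem~\ref{Contraction L2}. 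Combining all the estimates and letting $\delta\to0$ (by dominated convergence, using $\Theta_\delta((u-v)(t))\to|(u-v)(t)|\in L^1(D)$ and $\Theta_\delta(u_0-v_0)\to|u_0-v_0|\in L^1(D)$, which is integrable since $D$ is bounded) gives, $\mathbb{P}$-a.s.\ and for every $t\in[0,T]$,
\begin{align*}
	\int_D|u(t)-v(t)|\,dx \le \int_D|u_0-v_0|\,dx + L_f\int_0^t\int_D|u(s)-v(s)|\,dx\,ds,
\end{align*}
and Gronwall's lemma yields the claim with $C=\exp(L_fT)$. I expect the main obstacle to be the rigorous treatment of the reflection term: identifying the dual pairing $\langle\eta,S_\delta(u-v)\rangle_{\W',\W}$ with the measure integral $\int_{Q_T}S_\delta(u-v)\,d\eta$ and justifying that $u$, resp.\ $v$, vanishes $\eta$-, resp.\ $\nu$-, almost everywhere, which rests on the parabolic capacity theory recalled in the appendix; the chain-rule identity for $\partial_t$ against the nonlinear primitive $\Theta_\delta$ likewise uses the $\W$-regularity of $u-v$, which is precisely what the hypothesis $\eta,\nu\in L^{p'}(0,T;W^{-1,p'}(D))$ guarantees.
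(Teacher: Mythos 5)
Your overall strategy (test the equation for $u-v$ with a smooth, odd, bounded approximation of the sign function, split the elliptic term using (A1) for the part with frozen second argument and (A3) for the remainder, absorb the $f$-term with Lipschitz continuity, and conclude by Gronwall) is exactly the paper's strategy, and your treatment of the terms $I$, $II$, $III$ is correct and even slightly more detailed than the paper's (your dominated-convergence justification that the (A3)-error vanishes, using $\nabla(u-v)=0$ a.e.\ on $\{u=v\}$, is the argument the paper leaves implicit). The problem is the reflection term. You pass from the duality pairing $\langle \eta-\nu, S_\delta(u-v)\rangle$ to the measure integrals $\int S_\delta(u-v)\,d\eta - \int S_\delta(u-v)\,d\nu$ and then evaluate the integrand $\eta$-a.e.\ (resp.\ $\nu$-a.e.) by writing $u=0$ $\eta$-a.e.\ and $v\geq 0$ $\eta$-a.e. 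This step has a genuine gap: the integrand in the pairing--integral identification is the $\capa_p$-quasi continuous representative of $u-v\in\W$, whereas the representatives of $u$ and $v$ furnished by condition $iv.)$ of Definition \ref{Solution} are only quasi-everywhere \emph{defined}, not quasi continuous (indeed $u,v\notin\W$ in general, as Section \ref{Difficulties} stresses, since the stochastic integral does not cancel for a single solution). Two functions that agree Lebesgue-a.e.\ need not agree $\eta$-a.e.\ when $\eta$ may be singular with respect to Lebesgue measure, so you cannot identify the quasi-continuous representative of $u-v$ with the difference of the $iv.)$-representatives under $\eta$ or $\nu$. In addition, the facts you invoke --- that $\eta$, being simultaneously a nonnegative Radon measure and an element of $L^{p'}(0,T;W^{-1,p'}(D))$, does not charge sets of zero parabolic capacity, and that the pairing equals the integral of the quasi-continuous representative --- are nontrivial results not contained in the appendix, and they still would not close the identification problem above.

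The paper circumvents all of this by never leaving the $L^{p'}(0,T;W^{-1,p'}(D))$--$L^p(0,T;W^{1,p}_0(D))$ duality. With $n_\delta'$ playing the role of your $S_\delta$, one sets $w_\delta := u - \tfrac{\delta}{C_5}\, n_\delta'(u-v)^+$ and checks, using $0\leq n_\delta'(r)\leq \tfrac{C_5}{\delta} r$ for $r\geq 0$ and $u,v\geq 0$ a.e., that $w_\delta\geq 0$ a.e.\ in $Q_t$. Then
\begin{align*}
	\langle \eta, n_\delta'(u-v)\rangle \leq \langle \eta, n_\delta'(u-v)^+\rangle = \tfrac{C_5}{\delta}\,\langle \eta, u\rangle - \tfrac{C_5}{\delta}\,\langle \eta, w_\delta\rangle = -\tfrac{C_5}{\delta}\,\langle \eta, w_\delta\rangle \leq 0,
\end{align*}
where $\langle \eta, u\rangle=0$ is exactly how condition $iv.)$ reads when $\eta\in L^{p'}(0,T;W^{-1,p'}(D))$ (see the remark after Theorem \ref{Contraction L2}), and nonnegativity of $\eta$ against a.e.-nonnegative elements of $L^p(0,t;W^{1,p}_0(D))$ suffices. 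This uses only Lebesgue-a.e.\ information and the duality formulation of $iv.)$, avoiding capacity theory and the representative-identification issue entirely; you should replace your capacity-based argument for the reflection term by this (or an equivalent purely variational) device.
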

\begin{proof}
  This proof is inspired by \cite{STVZ23}. For $\delta >0$ let us consider $n_{\delta} \in C^{\infty}(\R)$ such that
  \begin{itemize}
  	\item $n_{\delta}(r)=r -2\delta$ for $r> \delta$,
  	\item $n_{\delta}(r)= n_{\delta}(-r)$ for all $r \in \R$,
	\item $0 \leq n_{\delta}'(r) \leq 1$ for $r \geq 0$,
	\item $0 \leq n_{\delta}'' \leq \frac{C_5}{\delta}$ for some constant $C_5>0$. 
  \end{itemize}
Then, $\operatorname{supp} (n_{\delta}'') \subset [-\delta , \delta]$. By classical chain rule in Sobolev spaces we have $n_{\delta}'(u-v) \in L^p(0,T;W_0^{1,p}(D))$. Testing 
  \begin{align}\label{eq 121224_01}
  	\partial_t(u-v) - \operatorname{div} (a(u, \nabla u) - a(v, \nabla v)) + f(u) - f(v) = \eta - \nu
  \end{align}
  with $n_{\delta}'(u-v)$ yields $\mathbb{P}$-a.s.
  \begin{align}\label{eq 091224_01}
  I + II + III = IV,
  \end{align}
  where
  \begin{align*}
  	I&= \frac{1}{2} \int_D n_{\delta}(u(t) - v(t)) - n_{\delta}(u_0 - v_0) \ dx \to \frac{1}{2} \int_D |u(t) - v(t)| - |u_0 - v_0| \ dx \text{ as } \delta \to 0^+, \\
  	II&= \int_0^t \int_D (a(u, \nabla u) - a(v, \nabla v)) \nabla (n_{\delta}'(u-v)) \ dx \ ds \\
  	&=  \int_0^t \int_D (a(u, \nabla u) - a(v, \nabla v)) n_{\delta}''(u-v) \nabla (u-v) \ dx \ ds \\
  	&=  \int_0^t \int_D (a(u, \nabla u) - a(u, \nabla v)) n_{\delta}''(u-v) \nabla (u-v) \ dx \ ds \\
  	&+ \int_0^t \int_D (a(u, \nabla v) - a(v, \nabla v)) n_{\delta}''(u-v) \nabla (u-v) \ dx \ ds \\
  	&\geq \int_0^t \int_D (a(u, \nabla v) - a(v, \nabla v)) n_{\delta}''(u-v) \nabla (u-v) \ dx \ ds \\
  	&\geq - \int_0^t \int_D (C_4 |\nabla v|^{p-1} + h)|u-v| n_{\delta}''(u-v) |\nabla (u-v)| \ dx \ ds \\
  	&\geq - \int_{\{|u-v| \leq \delta\}} (C_4 |\nabla v|^{p-1} + h) |u-v| \frac{C_5}{\delta} |\nabla (u-v)| \ dx \ ds \\
  	&\geq - \int_{\{|u-v| \leq \delta\}} (C_4 |\nabla v|^{p-1} + h) C_5 |\nabla (u-v)| \ dx \ ds \to 0 \text{ as } \delta \to 0^+
  \end{align*}
  and
  \begin{align*}
  	III&= \int_0^t \int_D (f(u) - f(v)) n_{\delta}'(u-v) \ dx \ ds \geq - L_f \int_0^t \int_D |u-v| \ dx \ ds.
  \end{align*}
  Since $n_{\delta}(-r) = n_{\delta}(r)$, we have $n_{\delta}'(-r)= - n_{\delta}'(r)$. Moreover, we have
  \begin{align*}
  	IV= \langle \eta - \nu, n_{\delta}'(u-v) \rangle_{X_t', X_t} = \langle \eta, n_{\delta}'(u-v) \rangle_{X_t', X_t} + \langle \nu, n_{\delta}'(v-u) \rangle_{X_t', X_t}.
  \end{align*}
  In order to show $IV \leq 0$ for any $\delta >0$, because of symmetry reasons we only need to show $ \langle \eta, n_{\delta}'(u-v) \rangle_{X_t', X_t} \leq 0$. Set $w_{\delta} := u - \frac{\delta}{C_5} n_{\delta}'(u-v)^+$. Then $w_{\delta} \geq 0$ since for $u \leq v$ we have $w_{\delta} = u \geq 0$ and for $u \geq v$ we have
  \begin{equation*}
  	w_{\delta} = u- \frac{\delta}{C_5} n_{\delta}'(u-v) = u-v - \frac{\delta}{C_5} n_{\delta}'(u-v) + v \geq v \geq 0.
  \end{equation*}
  Now, denoting $X_t$ as in the previous proof, we obtain
  \begin{align*}
  	&\langle \eta, n_{\delta}'(u-v) \rangle_{X_t', X_t} = \langle \eta, n_{\delta}'(u-v)^+ \rangle_{X_t', X_t} - \langle \eta, n_{\delta}'(u-v)^- \rangle_{X_t', X_t} \\
  	&\leq \langle \eta, n_{\delta}'(u-v)^+ \rangle_{X_t', X_t} = \frac{C_5}{\delta} \langle \eta, u-w_{\delta} \rangle_{X_t', X_t} \\
  	&= -\frac{C_5}{\delta} \langle \eta, w_{\delta} \rangle_{X_t', X_t} \leq 0.
  \end{align*}
  Equality \eqref{eq 091224_01} yields $\mathds{P}$-a.s. for all $t \in [0,T]$
  \begin{align*}
  	\int_D |u(t) - v(t)| \ dx \leq \int_D |u_0 - v_0| + 2L_f \int_0^t \int_D |u - v| \ dx \ ds.
  \end{align*}
  Similarly as in the previous proof, Gronwall's lemma yields the result.
\end{proof}
In the following we want to get rid of the assumption that the difference of two reflections is a functional in $L^{p'}(0,T; W^{-1,p'}(D))$. To this end, we need to assume that the difference of two solutions is continuous with respect to space-time.
\begin{thm}\label{L1 contraction_new}
  Let (A1) - (A3) be satisfied and $(u, \eta)$ resp. $(v, \nu)$ be two solutions of \eqref{Obstacle problem} with initial values $u_0 \in L^2(\Omega \times D)$ and $v_0 \in L^2(\Omega \times D)$ $\mathcal{F}_0$-measurable and non-negative a.e. in $\Omega \times D$ respectively such that $u_0 - v_0 \in L^2(\Omega; \mathcal{C}_0(D))$. Moreover, let $\Omega \ni \omega \mapsto (u-v)(\omega) \in \mathcal{C}([0,T]; \mathcal{C}_0(D))$ and $\Omega \ni \omega \mapsto (\eta - \nu)(\omega) \in M(Q_T)$ be measurable. Then there exists a constant $C>0$ only depending on $T$ and $f$ such that $\mathbb{P}$-a.s.
  \begin{align*}
  	 \sup\limits_{t \in [0,T]} \Vert u(t) - v(t) \Vert_{L^1(D)} \leq C \Vert u_0 - v_0 \Vert_{L^1(D)}.
  \end{align*}
\end{thm}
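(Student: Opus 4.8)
The plan is to reproduce, with suitable modifications, the proof of Theorem~\ref{L1 contraction}; the essential change is that the duality bracket used there must now be read as the integral of a bounded continuous function against the signed Radon measure $\eta-\nu$. Subtracting the two identities of Definition~\ref{Solution}.iii.\ (the It\^{o} integrals cancel), $w:=u-v$ satisfies $\mathbb{P}$-a.s.
\begin{align*}
  \partial_t w-\operatorname{div}\bigl(a(\cdot,u,\nabla u)-a(\cdot,v,\nabla v)\bigr)+f(u)-f(v)=\eta-\nu\qquad\text{in }\W'.
\end{align*}
By (A2) the divergence term belongs to $L^{p'}(0,T;W^{-1,p'}(D))$, and since $|f(u)-f(v)|\le L_f|u-v|$ with $u-v\in L^2(Q_T)$ $\mathbb{P}$-a.s., so does $f(u)-f(v)$; hence $\partial_t w$ decomposes as an element of $L^{p'}(0,T;W^{-1,p'}(D))$ plus the bounded measure $\eta-\nu\in M(Q_T)$. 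Being a non-negative Radon measure lying in $\W'$, $\eta$ does not charge sets of zero parabolic capacity (and likewise $\nu$), so $\eta-\nu$ is diffuse. By hypothesis $w$ admits a representative in $\mathcal{C}([0,T];\mathcal{C}_0(D))$ with $w(0)=u_0-v_0$; being continuous it is $\capa_p$-quasi continuous, hence coincides quasi-everywhere -- and therefore $\eta$- and $\nu$-a.e. -- with $\tilde u-\tilde v$, where $\tilde u,\tilde v$ are the quasi continuous representatives of Definition~\ref{Solution}.iv.

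For $\delta>0$ take $n_\delta$ exactly as in the proof of Theorem~\ref{L1 contraction}. Since $u,v\in L^p(0,T;W_0^{1,p}(D))$ $\mathbb{P}$-a.s.\ and $n_\delta'$ is smooth with $n_\delta'(0)=0$, $|n_\delta'|\le1$, the chain rule in Sobolev spaces gives $n_\delta'(w)\in L^p(0,T;W_0^{1,p}(D))$, while composing the continuous map $t\mapsto w(t)\in\mathcal{C}_0(D)$ with $n_\delta'$ shows $n_\delta'(w)\in\mathcal{C}([0,T];\mathcal{C}_0(D))$; in particular $n_\delta'(w)$ is a bounded continuous function on $Q_T$, which can be integrated against $\eta-\nu$. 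Pairing the equation above with $n_\delta'(w)$ produces, $\mathbb{P}$-a.s.\ for every $t\in[0,T]$, the identity $I+II+III=IV$, where $I,II,III$ are literally the quantities from the proof of Theorem~\ref{L1 contraction} and
\begin{align*}
  IV=\int_{(0,t]\times D}n_\delta'(w)\,d(\eta-\nu)=\int_{(0,t]\times D}n_\delta'(u-v)\,d\eta+\int_{(0,t]\times D}n_\delta'(v-u)\,d\nu.
\end{align*}
The nonroutine point -- and the main obstacle -- is the justification of this identity, namely the parabolic chain-rule/It\^{o} formula
\[\int_D n_\delta(w(t))\,dx-\int_D n_\delta(w(0))\,dx=\int_0^t\bigl\langle\partial_t w,\,n_\delta'(w)\bigr\rangle\,ds\]
when $\partial_t w$ carries a diffuse-measure part; here $\langle\partial_t w,n_\delta'(w)\rangle$ denotes the action of the $W^{-1,p'}(D)$-part on $n_\delta'(w)\in W_0^{1,p}(D)$ plus the integral of $n_\delta'(w)$ against $\eta-\nu$. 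I would obtain it by mollifying $w$ in time, pairing the mollified equation (all terms being genuine functions of time) with $n_\delta'(w_\epsilon)$, for which the formula is classical, and letting $\epsilon\to0$: the $W^{-1,p'}$/$L^p$ terms converge as in the deterministic obstacle theory, the initial term causes no difficulty since $w(0)=u_0-v_0\in\mathcal{C}_0(D)$ and $w$ is continuous in time, and -- crucially -- $n_\delta'(w_\epsilon)\to n_\delta'(w)$ uniformly on $Q_T$ (continuity of $w$) while the time-mollifications of $\eta-\nu$ converge weakly-$\ast$ to $\eta-\nu$ in $M(Q_T)$ with uniformly bounded total variation, so that the mollified reflection term tends to $\int_{(0,t]\times D}n_\delta'(w)\,d(\eta-\nu)$. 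Alternatively one may invoke the chain-rule results for parabolic problems with diffuse-measure data in \cite{DPP,Pet08,PPP11}.

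Granting the identity, the bounds on $I$, $II$ and $III$ are exactly those in the proof of Theorem~\ref{L1 contraction}: $I\to\frac12\int_D\bigl(|w(t)|-|u_0-v_0|\bigr)\,dx$ as $\delta\to0^+$, $II\ge-o(1)$ by (A1), (A3) and $\operatorname{supp}n_\delta''\subset[-\delta,\delta]$, and $III\ge-L_f\int_0^t\int_D|w|\,dx\,ds$. For the reflection term it suffices, by symmetry, to show $\int_{(0,t]\times D}n_\delta'(u-v)\,d\eta\le0$. Dropping the non-negative contribution of $n_\delta'(u-v)^-$ in $n_\delta'(u-v)=n_\delta'(u-v)^+-n_\delta'(u-v)^-$ and setting $w_\delta:=\tilde u-\tfrac{\delta}{C_5}n_\delta'(\tilde u-\tilde v)^+$, the elementary computation from the proof of Theorem~\ref{L1 contraction} gives $w_\delta\ge0$ quasi-everywhere, hence $\eta$-a.e.\ since $\eta$ is diffuse; therefore
\begin{align*}
  \int_{(0,t]\times D}n_\delta'(u-v)\,d\eta\le\int_{(0,t]\times D}n_\delta'(u-v)^+\,d\eta=\frac{C_5}{\delta}\int_{(0,t]\times D}(\tilde u-w_\delta)\,d\eta=-\frac{C_5}{\delta}\int_{(0,t]\times D}w_\delta\,d\eta\le0,
\end{align*}
using $0\le\int_{(0,t]\times D}\tilde u\,d\eta\le\int_{Q_T}\tilde u\,d\eta=0$ from Definition~\ref{Solution}.iv. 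Thus $IV\le0$ for every $\delta>0$, and letting $\delta\to0^+$ in $I+II+III\le0$ yields, $\mathbb{P}$-a.s.\ for all $t\in[0,T]$,
\begin{align*}
  \int_D|u(t)-v(t)|\,dx\le\int_D|u_0-v_0|\,dx+2L_f\int_0^t\int_D|u-v|\,dx\,ds,
\end{align*}
and Gronwall's lemma gives the claim with $C=\exp(2TL_f)$. The continuity hypotheses on $u-v$ and $u_0-v_0$ are precisely what make both the chain-rule identity and the measure pairings in $IV$ legitimate.
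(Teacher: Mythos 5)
Your proposal is correct and follows essentially the same route as the paper: the same test function $n_\delta'(u-v)$, the same decomposition $I+II+III=IV$ with identical treatment of $I$, $II$, $III$, the same $w_\delta$-trick to show $IV\leq 0$, and Gronwall's lemma. The only difference is in how the key identity is justified: where you sketch a time-mollification argument (and use diffuseness of $\eta$, $\nu$ to upgrade quasi-everywhere statements to $\eta$-a.e.\ ones), the paper simply invokes the It\^{o} formula of \cite[Theorem 2.3]{FP}, which is applicable precisely because $u-v$ is assumed continuous in space-time.
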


\begin{proof}
According to \cite[Theorem 2.3]{FP}, an It\^{o} formula to \eqref{eq 121224_01} is available and by using $u \mapsto \int_D n_{\delta}(u) \ dx$, where $n_{\delta}$ is defined as in Theorem \ref{L1 contraction}, we obtain
	\begin{align*}
		I + II + III = IV,
	\end{align*}
where $I, II$ and $III$ coincide with the respective expressions in the previous proof and can be treated in the exact same way. Moreover, we have
	\begin{align*}
		IV= \int_{Q_t} n_{\delta}'(u-v) \, d(\eta - \nu) = \int_{Q_t} n_{\delta}'(u-v) \, d\eta + \int_{Q_t} n_{\delta}'(v-u) \, d\nu.
	\end{align*}
Similarly as in the previous proof, it is sufficient to show $\int_{Q_t} n_{\delta}'(u-v) \, d\eta\leq 0$ for $\delta >0$ and this inequality may be obtained by using the same trick as before: we set $w_{\delta}:= u - \frac{\delta}{C_5} n_{\delta}'(u-v)^+$, then $w_{\delta} \geq 0$ and
	\begin{align*}
		\int_{Q_t} n_{\delta}'(u-v) \, d\eta \leq - \frac{C_5}{\delta} \int_{Q_t} w_{\delta} \, d\eta \leq 0.
	\end{align*}
Now, Gronwall's lemma yields the result.
\end{proof}

\section{Discussion of more general uniqueness results}\label{Difficulties}
In this section we want to give an overview about the difficulties to show a uniqueness result in the general case. In general, a solution $u$ to \eqref{Obstacle problem} satisfies poor regularity properties. We cannot even expect a solution to be $\operatorname{cap}_p$-quasi continuous, especially we have $u \notin \W$ and $u - \int_0^{\cdot} \Phi \ dW \notin \W$, even for $\Phi$ satisfying arbitrary high regularity assumptions. Moreover,  the reflection $\eta$ is a locally bounded random Radon-measure on $Q_T$ only which is an element in $L^{p'}(\Omega, \W')$ in general (see \cite{STVZ24} for more information). Under these circumstances, an It\^{o} formula for $u$ is out of reach. But in the theory of well-posedness of SPDEs it is well known that an It\^{o} formula for the solution itself is crucial in order to obtain a general uniqueness result. Nevertheless, one may try to use regularization arguments to obtain an equation where an It\^{o} formula is available. For two solutions $(u, \eta)$ and $(v, \nu)$ you may consider equation \eqref{eq 121224_01} and try to test this equality by a regular approximation of $u-v$ (let us say $\Pi_n(u-v)$ for some parameter $n \in \mathbb{N}$). But, since $u-v \notin \W$ in general (actually, we only now $u-v \in L^p(0,T, W_0^{1,p}(D))$ and $\partial_t(u-v) \in \W'$), it is unclear how to pass to the limit in $\langle \partial_t (u-v), \Pi_n(u-v) \rangle_{\W', \W}$. On the other hand, you may try to regularize the equation first. The first step might be testing the whole equation \eqref{eq 121224_01} by $\varphi \in \mathcal{C}_c^{\infty}(D)$. This yields $\mathds{P}$-a.s.
\begin{align*}
	&\partial_t(u-v)\varphi - \operatorname{div} \big(\varphi(a(u, \nabla u) - a(v, \nabla v))\big) + (a(u, \nabla u) - a(v, \nabla v)) \nabla \varphi + (f(u) - f(v)) \varphi \\
	&= \eta^{\varphi} - \nu^{\varphi}
\end{align*}
in $\W'$, where $\eta^{\varphi}:= \varphi \eta$ and $\nu^{\varphi} := \varphi \nu$. Eventually, $\eta^{\varphi}$ is now a bounded Radon measure (the same applies to $\nu^{\varphi}$, respectively). Unfortunately, even this procedure seems not to help since the term $(a(u, \nabla u) - a(v, \nabla v)) \nabla \varphi$ is not Lipschitz with respect to $u$. Hence, even after more regularization steps and obtaining an equality for which an It\^{o} formula is available, an argumentation using Gronwall's lemma seems out of reach. Therefore, standard argumentations to obtain uniqueness of solutions of SPDEs seem not to work in this sequel.

\section{Appendix}\label{Appendix}
\begin{defn}[see, {\cite[Definition 2.7]{DPP}}]\label{Definition capacity}
	For any open set $U\subset Q_T$ we define the (parabolic $p$-) capacity of $U$ as
	\[\capa_p(U)=\inf\left\{\Vert v\Vert_{\mathcal{W}} \ | \ v\in\mathcal{W}, \ v\geq \mathds{1}_U \ \text{a.e. in} \ Q_T\right\}\]
	with the convention $\inf\emptyset:=+\infty$. Then, for any Borel subset $B\subset Q_T$ the definition is extended by setting
	\[\capa_p(B)=\inf\left\{\capa_p(U) \ | \ U\subset Q_T \ \text{open subset}, \  B\subset U\right\}.\]
\end{defn}
\begin{defn}[see, e.g., {\cite[p. 531]{P}}]\label{Definition cap-quasi continuous}
	A function $v:Q_T\rightarrow\mathbb{R}$ is called $\capa_p$-quasi continuous, if for every $\varepsilon>0$ there exists an open set $U_{\varepsilon}\subset Q_T$ with $					\capa_p(U_{\varepsilon})\leq \varepsilon$ and such that $v_{|Q_T\setminus U_{\varepsilon}}$ is continuous on $Q_T\setminus U_{\varepsilon}$. Especially, a $\capa_p$-quasi continuous function is Borel-measurable.
\end{defn}
\begin{lem}[see, e.g., {\cite[Lemma 2.20]{DPP}}]\label{220822_lem01}
	Any element $v$ of $\mathcal{W}$ has a $\capa_p$-quasi continuous representative $\widetilde{v}$ which is $\capa_p$-quasi everywhere unique in the sense that two $\capa_p$-quasi 	continuous representatives of $v$ are equal except on a set of zero capacity.
\end{lem}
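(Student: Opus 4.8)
The statement has two parts — existence of a $\capa_p$-quasi continuous representative and its $\capa_p$-quasi everywhere uniqueness — and I would reduce both to a single \emph{weak capacitary estimate}: there is a constant $C>0$ such that for every $w$ in the dense subspace $\mathcal{R}\subset\W$ of functions smooth on $\overline{Q_T}$ and vanishing on $(0,T)\times\partial D$, and every $\lambda>0$,
\begin{equation*}
	\capa_p(\{|w|>\lambda\}) \leq \frac{C}{\lambda}\,\Vert w\Vert_{\W}.
\end{equation*}
Density of $\mathcal{R}$ in $\W$ is the classical smoothing result for the parabolic space $\W$. Granting the estimate, existence follows by a Borel--Cantelli argument and uniqueness from the principle that a $\capa_p$-quasi continuous function vanishing a.e. vanishes $\capa_p$-q.e.

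For existence I would fix $v\in\W$ and choose $(v_n)\subset\mathcal{R}$ with $v_n\to v$ in $\W$ and, after passing to a subsequence, $\Vert v_{n+1}-v_n\Vert_{\W}\leq 4^{-n}$. Applying the weak estimate to $v_{n+1}-v_n\in\mathcal{R}$ with $\lambda=2^{-n}$ gives $\capa_p(A_n)\leq C\,2^{-n}$ for $A_n:=\{|v_{n+1}-v_n|>2^{-n}\}$, so that $B_k:=\bigcup_{n\geq k}A_n$ satisfies, by countable subadditivity of $\capa_p$, $\capa_p(B_k)\leq 2C\,2^{-k}\to0$. On $Q_T\setminus B_k$ one has $|v_m-v_n|\leq 2^{-n+1}$ for $m>n\geq k$, so $(v_n)$ converges uniformly there; the limit $\widetilde v$ is continuous on every $Q_T\setminus B_k$, and taking $U_\varepsilon=B_k$ with $2C\,2^{-k}\leq\varepsilon$ exhibits $\widetilde v$ as $\capa_p$-quasi continuous in the sense of Definition \ref{Definition cap-quasi continuous}. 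Since $v_n\to v$ in $L^p(Q_T)$ forces a.e. convergence along a further subsequence, $\widetilde v=v$ a.e., so $\widetilde v$ is a quasi continuous representative of $v$.

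For quasi-everywhere uniqueness, let $\widetilde v_1,\widetilde v_2$ be two $\capa_p$-quasi continuous representatives; then $g:=\widetilde v_1-\widetilde v_2$ is $\capa_p$-quasi continuous and vanishes a.e., and the claim amounts to $g=0$ $\capa_p$-q.e. Part of this is immediate from the construction: interlacing two fast-Cauchy sequences in $\mathcal{R}$ with the same $\W$-limit into a single fast-Cauchy sequence and rerunning the Borel--Cantelli argument shows the constructed representative is independent, up to a $\capa_p$-null set, of the approximating sequence. The passage from here to arbitrary quasi continuous representatives — equivalently the vanishing-a.e.-implies-vanishing-q.e. principle — is the standard potential-theoretic fact, which again feeds on the weak capacitary estimate; I would invoke it in the form already available in the parabolic capacity theory of \cite{DPP}.

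The hard part is the weak capacitary estimate. From Definition \ref{Definition capacity}, the natural route is to cover the open set $\{|w|>\lambda\}$ by the competitor $z=\tfrac1\lambda|w|$, which satisfies $z\geq\mathds{1}_{\{|w|>\lambda\}}$ a.e.; the difficulty is that one must control $\Vert z\Vert_{\W}$, and while the spatial part behaves well under the chain rule in $W^{1,p}_0(D)$, the time derivative $\partial_t|w|=\operatorname{sgn}(w)\,\partial_t w$ is only controlled in $L^{p'}(Q_T)$ and not by $\Vert\partial_t w\Vert_{L^{p'}(0,T;W^{-1,p'}(D))}$, since multiplication by $\operatorname{sgn}(w)$ is not bounded on $W^{-1,p'}(D)$. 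This is precisely the feature distinguishing the parabolic capacity from the elliptic Sobolev capacity, where truncation is harmless, and it is why the estimate must be obtained on the regular subclass $\mathcal{R}$ through the more delicate potential-theoretic arguments of \cite{DPP} rather than by a one-line comparison; once it is in hand on $\mathcal{R}$, existence and uniqueness reduce to the Borel--Cantelli routine above.
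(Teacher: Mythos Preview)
Your sketch is correct and follows the standard route that the cited reference \cite{DPP} itself takes: a weak capacitary (Chebyshev-type) inequality on a dense regular subclass, a Borel--Cantelli extraction to produce the quasi continuous representative, and the ``a.e.\ vanishing implies q.e.\ vanishing'' principle for uniqueness. The paper does not give its own proof of this lemma; it simply cites \cite[Lemma~2.20]{DPP} and then remarks that the uniqueness part is a direct consequence of Lemma~\ref{LemPositiveQuasiCont} (applied to $\widetilde v_1-\widetilde v_2$ and to $\widetilde v_2-\widetilde v_1$). Your uniqueness argument lands in the same place---you explicitly reduce to the vanishing-a.e.-implies-vanishing-q.e.\ principle, which is exactly Lemma~\ref{LemPositiveQuasiCont}---so the intermediate step about interlacing two approximating sequences, while correct, is not needed once that lemma is available. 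In short, your proposal is a fuller unpacking of what the paper defers to \cite{DPP}, and is in complete agreement with the paper's one-line reduction of uniqueness to Lemma~\ref{LemPositiveQuasiCont}.
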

The uniqueness part of Lemma \ref{220822_lem01} is a direct consequence of the following (stronger) result:
\begin{lem}\label{LemPositiveQuasiCont}
	If $u:Q_T\rightarrow \mathbb{R}$ is a $\capa_p$-quasi continuous function such that $u \geq 0$ a.e. in $Q_T$, then $u \geq 0$ q.e. in $Q_T$.
\end{lem}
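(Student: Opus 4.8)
The plan is to show that the set $N:=\{x\in Q_T:\ u(x)<0\}$ has zero parabolic capacity. Since $u$ is $\capa_p$-quasi continuous it is Borel measurable (Definition \ref{Definition cap-quasi continuous}), so $N$ is Borel and $\capa_p(N)$ is well defined; moreover $N$ is Lebesgue-null because $u\geq 0$ a.e.\ in $Q_T$. The strategy is: for every $\eps>0$ I would produce an \emph{open} set $O_\eps\subset Q_T$ with $N\subseteq O_\eps$ and $\capa_p(O_\eps)\leq\eps$. By the definition of $\capa_p$ on Borel sets (Definition \ref{Definition capacity}) this forces $\capa_p(N)\leq\eps$, and letting $\eps\to 0^+$ yields $\capa_p(N)=0$, i.e.\ $u\geq 0$ quasi-everywhere.

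To build $O_\eps$, first use quasi-continuity: choose an open $U_\eps\subset Q_T$ with $\capa_p(U_\eps)\leq\eps$ such that $u$ restricted to the closed set $Q_T\setminus U_\eps$ is continuous. Then $N\setminus U_\eps=\{x\in Q_T\setminus U_\eps:\ u(x)<0\}$ is the preimage of the open half-line $(-\infty,0)$ under that restriction, hence relatively open in $Q_T\setminus U_\eps$; write $N\setminus U_\eps=G\cap(Q_T\setminus U_\eps)$ with $G\subset Q_T$ open. For each $x\in N\setminus U_\eps$ pick a ball $B(x,r_x)\subset G$, set $V_\eps:=\bigcup_{x\in N\setminus U_\eps}B(x,r_x)$ (open, containing $N\setminus U_\eps$) and $O_\eps:=U_\eps\cup V_\eps$. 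By construction $V_\eps\cap(Q_T\setminus U_\eps)\subseteq G\cap(Q_T\setminus U_\eps)=N\setminus U_\eps\subseteq V_\eps$, so $O_\eps\setminus U_\eps=V_\eps\setminus U_\eps=N\setminus U_\eps$ is Lebesgue-null, while $N\subseteq O_\eps$.

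It then remains to check that enlarging $U_\eps$ by a Lebesgue-null set does not increase its capacity, i.e.\ $\capa_p(O_\eps)=\capa_p(U_\eps)$. Monotonicity of $\capa_p$ gives $\capa_p(U_\eps)\leq\capa_p(O_\eps)$; for the converse I would note that any $v\in\W$ with $v\geq\mathds{1}_{U_\eps}$ a.e.\ automatically satisfies $v\geq\mathds{1}_{O_\eps}$ a.e., because $\{v<\mathds{1}_{O_\eps}\}\subseteq(O_\eps\setminus U_\eps)\cup\{v<\mathds{1}_{U_\eps}\}$ and both sets on the right are Lebesgue-null. Hence every competitor admissible for $\capa_p(U_\eps)$ in Definition \ref{Definition capacity} is admissible for $\capa_p(O_\eps)$, so $\capa_p(O_\eps)\leq\capa_p(U_\eps)$. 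Combining the steps, $\capa_p(N)\leq\capa_p(O_\eps)=\capa_p(U_\eps)\leq\eps$, and the proof is complete.

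The only genuinely delicate point — and precisely where the a.e.\ hypothesis gets upgraded to a q.e.\ conclusion — is the passage from the relatively open, Lebesgue-null set $N\setminus U_\eps$ to a bona fide open subset $O_\eps$ of $Q_T$ whose "added part" is still Lebesgue-null, combined with the elementary but essential observation that the infimum defining $\capa_p$ only sees the competitors' constraint up to Lebesgue-null sets. All remaining ingredients (monotonicity of $\capa_p$, Borel measurability of $u$ and of $N$, the ball-covering of a relatively open set) are routine.
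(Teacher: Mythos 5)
Your proof is correct, and since the paper states Lemma \ref{LemPositiveQuasiCont} without giving a proof, there is nothing to diverge from: your argument is the standard one, resting on exactly the right observation that the admissibility constraint $v\geq \mathds{1}_U$ in Definition \ref{Definition capacity} is only imposed a.e., so enlarging $U_\eps$ by the Lebesgue-null set $N\setminus U_\eps$ does not increase the capacity, which upgrades the a.e.\ inequality to a q.e.\ one. One minor simplification: the ball covering is redundant, since the open set $G$ itself (taken as an open subset of $Q_T$) already satisfies $G\setminus U_\eps = G\cap(Q_T\setminus U_\eps)=N\setminus U_\eps$, so you may take $O_\eps = U_\eps\cup G$ directly.
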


\section*{Acknowledgements}
This work has been accepted for publication in the proceedings of the Seventeenth International Conference Zaragoza-Pau on Mathematics and its Applications, 2024.
  
\bibliographystyle{plain}
\bibliography{Sapountzoglou_uniqueness_reflection_25.bib}

\end{document}